\theoremstyle{plain}
\newtheorem{Theorem}{Theorem}[section]
\newtheorem{Corollary}[Theorem]{Corollary}
\newtheorem{Proposition}[Theorem]{Proposition}
\theoremstyle{definition}
\newtheorem{Problem}{Problem}
\newtheorem{Definition}{Definition}
\theoremstyle{remark}
\newtheorem{Remark}{Remark}
\newcommand{\KK}{\mathbb{K}}
\newcommand{\PP}{\mathbb{P}}
\newcommand{\XX}{\mathbb{X}}
\newcommand{\FF}{\mathbb{F}}
\newcommand{\mm}{\mathfrak{m}}
\newcommand{\qq}{\mathfrak{q}}
\newcommand{\pp}{\mathfrak{p}}
\newcommand{\sat}{\mathrm{sat}}
\newcommand{\reg}{\mathrm{reg}}
\newcommand{\depth}{\mathrm{depth}}
\newcommand{\Min}{\mathrm{Min}}
\newcommand{\G}{\mathcal{G}}
\def\cocoa{{\hbox{\rm C\kern-.13em o\kern-.07em C\kern-.13em o\kern-.15em A}}}
\begin{document}
\author[R. Abdolmaleki]{Reza Abdolmaleki}
\email{reza.abd110@gmail.com, abdolmaleki@iasbs.ac.ir}
\address{Reza Abdolmaleki,  School of Mathematics, Institute for Research in Fundamental Sciences (IPM), P.O. Box: 19395-5746, Tehran, Iran}
\address{Department of Mathematics, Institute for Advanced Studies in Basic Sciences (IASBS), Zanjan 45137-66731, Iran}

\author[A. A. Yazdan Pour]{{Ali Akbar} {Yazdan Pour}}
\email{yazdan@iasbs.ac.ir}
\address{Department of Mathematics, Institute for Advanced Studies in Basic Sciences (IASBS), Zanjan 45137-66731, Iran}

\title{The saturation number of monomial ideals}

\begin{abstract}
Let $S=\mathbb{K}[x_1,\ldots, x_n]$  be the polynomial ring  over a field $\mathbb{K}$ and $\mathfrak{m}= (x_1, \ldots, x_n)$ be the homogeneous maximal ideal of $S$. For an ideal $I \subset S$, let $\mathrm{sat}(I)$ be the minimum number $k$ for which $I \colon \mathfrak{m}^k = I \colon \mathfrak{m}^{k+1}$. In this paper, we compute the saturation number of irreducible monomial ideals and their powers. We apply this result to  find the saturation number of the ordinary powers and symbolic powers of some families of monomial ideals in terms of the saturation number of irreducible components appearing in an irreducible decomposition of these ideals. Moreover, we give an explicit formula for the saturation number of monomial ideals in two variables.
\end{abstract}

\thanks{This research was in part supported by a grant from IPM (No.1400130019). The first author would likes to thank the Institute for Research in Fundamental Sciences (IPM), Tehran, Iran.}

\subjclass[2010]{Primary: 13F20; Secondary: 05E40.}
\keywords{irreducible decomposition, saturation number,  stable  ideal,  symbolic power}

\maketitle

\section{Introduction}

Let $\KK$ be a field and $S=\KK[x_1,\ldots,x_n]$  be the polynomial ring over $\KK$ in the variables $x_1,\ldots,x_n$ endowed with standard grading (i.e. $\deg(x_i)=1$). 
For ideals $I$ and $J$ in $S$ the \textit{quotient ideal} of $I$ with respect to $J$  is defined as $I\colon J=\{f \in S\colon\; fJ \subset I\}$. Moreover, the \textit{saturation of $I$ with respect to $J$} is defined as $I\colon J^{\infty}:=\bigcup_{k\geq 0}( I\colon J^{k})$.  If $\KK$ is algebraically closed, the affine variety $V(I\colon J^{\infty})$ is the Zariski closure of the difference of varieties $V(I)$ and $V(J)$, that is, $V(I\colon J^{\infty})=\overline{ (V(I)-V(J))}$ (see \cite[\S4, Theorem~10]{CLO}).

Let $\mm=(x_1, \ldots, x_n)$ be the unique graded maximal ideal of $S$ and $I$ be an ideal in $S$. The ideal 
\[ I^{\sat}:=\bigcup_{k\geq 0}( I\colon \mm^{k}) \]
is called the \textit{saturation of $I$}. If $I$ is a graded ideal, then $I^{\sat}$ is indeed the largest graded ideal of $S$ which defines the same subscheme of $\PP_{\KK}^n$ as $I$.  An ideal $I$ is called saturated, if $I= I^{\sat}$ (equivalently $I \colon \mm=I$). The ideal $I^{\sat}$  is the smallest saturated ideal containing $I$. 

The Saturation of an ideal appears in \cite{GRS} to compute the vanishing ideal of a finite set of rational points in the projective space. Let $\FF_q$ be a finite field, $I$ be a graded ideal of $\FF_q[x_1, \ldots, x_n]$ such that $(I(\PP_{\FF_q}^{n-1}) \colon I) \neq I(\PP_{\FF_q}^{n-1})$ and $\XX=V(I) \subset \PP_{\FF_q}^{n-1}$. In \cite{GRS} the authors show that 
\[I(\XX) = (I + I(\PP_{\FF_q}^{n-1}))^{\sat}.\]
Notice that $I(\PP_{\FF_q}^{n-1}) = (x_i^q x_j - x_ix_j^q \colon \; 1 \leq i < j \leq n)$ (see \cite{djm-rr}).

For an ideal $I\subset S$ we have the ascending chain 
\[ I\subseteq I\colon \mm\subseteq I\colon\mm^2\subseteq \cdots \]
of ideals in $S$. Since $S$ is a Noetherian ring, there exists an integer $k\geq 0$ such that $I\colon\mm^k=I\colon\mm^{k+1}$. In this case, $I\colon\mm^k=I\colon\mm^i$ for all $i \geq k$. Set
\[ \sat(I)=\min\{k\colon\; I\colon\mm^k=I\colon\mm^{k+1}\}. \]
The number $\sat(I)$ is called the \textit{saturation number} of $I$. Therefore, $\sat(I)$ is the minimum number of steps $k$ such that $I^{\sat}=I \colon \mm^k$. Moreover, an ideal $I$ is saturated if and only if $\sat(I) = 0$. Note that $H_{\mm}^0(S/I) = \cup_{k \geq 0} (0 \colon \mm^{k})= I^{\sat}/I$, where $H_{\mm}^i(\mbox{-})$  is the $i$th local cohomology functor with respect to $\mm$. Thus, $\sat(I) = 0$ if and only if $\depth(S/I) > 0$. In particular, any squarefree monomial ideal (strictly contained in $\mm$) is saturated. Some statements on the saturation number of graded ideals and their powers can be found in \cite{HKM}. The saturation number of $\mathbf{c}$-bounded monomial ideals and their powers is studied in \cite{AHZ}. Recall that a monomial $u=x_1^{a_1}\cdots x_n^{a_n}$  is called \textit{$\mathbf{c}$-bounded} if $a_i\leq c_i$ for all $i$, where $\mathbf{c}=(c_1,\ldots,c_n)$ is an integer vector with $c_i\geq 0$. Let $\G(I)$ be the (unique) set of  minimal monomial generators of a monomial ideal $I$. A monomial ideal $I$ is called \textit{$\mathbf{c}$-bounded stable} if for all $u\in \G(I)$ and all $i<m(u)$ for which $x_iu/x_{m(u)}$ is $\mathbf{c}$-bounded, it follows that $x_iu/x_{m(u)}\in I$. Here $m(u)$ denotes the maximal number $j$ such that $x_j |u$.

In this paper we study the saturation number of monomial ideals. To this end, we compute the saturation number of  irreducible monomial ideals and their powers (Theorem~\ref{main}). Recall that an  ideal $I$ of a ring $R$ is called {\em irreducible} if $I$ cannot be written as an intersection of two ideals of $R$ that properly contain $I$. It turns out that a monomial ideal $\qq \subset S$ is irreducible if and only if  $\qq = (x_{i_1}^{a_1}, \ldots ,x_{i_r}^{a_r})$, where $a_i >0$ for all $i$ (\cite[Proposition~6.1.16]{Vil}). It is well-known that any monomial ideal $I$ has a unique (up to permutation) irredundant decomposition $I = \qq_1\cap \cdots \cap \qq_r$ such that $\qq_i$ is an irreducible monomial ideal for $i=1,\ldots, r$ (\cite[Theorem~6.1.17]{Vil}). Let $I = \qq_1\cap \cdots \cap \qq_r$ be an irredundant irreducible decomposition of $I$, that is, none of the ideals $\qq_i$ can be omitted in this presentation. We show that $ \sat(I) \leq \max\{\sat(\qq_i) \colon\; i=1, \ldots , r\}$. Moreover, we prove that the equality holds, if $I$ is $\mm$-primary (Corollary~\ref{satreg}). Corollary~\ref{pr} and Propositions~\ref{symb}, \ref{satsat} give results on the saturation number of some symbolic powers of monomial ideals. 

Recall that a monomial ideal $I$ is called {\em  stable} if for all $u\in \G(I)$ and all $i<m(u)$, one has $x_iu/x_{m(u)}\in I$. Proposition~\ref{stable} and Proposition~\ref{ggg} provide an explicit formula to compute the saturation number of stable monomial ideals and their powers. Finally in Theorem~\ref{primsat} we give an explicit formula for the saturation number of any monomial ideal in two variables.

\section{Main results}
This section is dedicated to finding the saturation number of some families of monomial ideals. Our first main result computes the saturation number of irreducible monomial ideals. Next, we compare  the saturation number of monomial ideals, their ordinary powers and symbolic powers with the saturation number of powers of the components appearing in an irredundant irreducible decomposition of these ideals.

\begin{Theorem} \label{main}
Let $\qq=(x_1^{a_1}, \ldots ,x_n^{a_n}) \subset S$  be an irreducible monomial ideal with $a_i >0$ for $i= 1, \ldots, n$. Let $i_0 \in \{1. \ldots, n\}$ be such that $a_{i_0} =\max\{a_1, \ldots, a_n\}$ and $t_k=ka_{i_0} + \sum\limits_{i \neq i_0} a_i-n+1$ for all $k \geq 1$. Then
\begin{enumerate}
\item[(a)] $\qq^k\colon\mm^{t_k-1}=\mm$. 
\item[(b)]$\sat(\qq^k)=t_k$.
\end{enumerate}
\end{Theorem}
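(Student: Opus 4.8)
The plan is to reduce both parts to a single combinatorial question: for which exponents $t$ does $\mm^t \subseteq \qq^k$ hold? The key tool is an explicit membership criterion for $\qq^k$. Since $\qq^k$ is generated by the monomials $\prod_{i=1}^n x_i^{a_i b_i}$ with $b_i \geq 0$ and $\sum_i b_i = k$, a monomial $\prod_{i=1}^n x_i^{c_i}$ lies in $\qq^k$ if and only if it is divisible by such a generator, i.e. if and only if there exist integers $0 \le b_i \le \lfloor c_i/a_i\rfloor$ with $\sum_i b_i = k$. This is possible precisely when $\sum_{i=1}^n \lfloor c_i/a_i\rfloor \geq k$. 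First I would record this criterion as the backbone of the argument, so that $\mm^t \subseteq \qq^k$ becomes the statement that every monomial of degree $t$ has floor-sum at least $k$.

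Next I would determine the largest degree of a monomial lying outside $\qq^k$, which by the criterion amounts to maximizing $\sum_i c_i$ subject to $\sum_i \lfloor c_i/a_i\rfloor \leq k-1$ over nonnegative integers $c_i$. Writing $c_i = a_i q_i + r_i$ with $0 \leq r_i < a_i$, the constraint reads $\sum_i q_i \leq k-1$ while the objective becomes $\sum_i a_i q_i + \sum_i r_i$. Since the $r_i$ are unconstrained by the bound, one takes $r_i = a_i - 1$ for each $i$, contributing $\sum_i(a_i-1) = \sum_i a_i - n$; and since $a_i \le a_{i_0}$ gives $\sum_i a_i q_i \le a_{i_0}\sum_i q_i \le a_{i_0}(k-1)$, the $q$-part is maximized by spending the whole budget on the coordinate $i_0$, namely $q_{i_0} = k-1$ and $q_i = 0$ otherwise. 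I expect this rearrangement step — showing that concentrating the whole multiples in the coordinate $i_0$ is optimal and that the maximum is attained at an integer point — to be the main obstacle, though it reduces to the elementary inequality just noted. This yields maximal degree $a_{i_0}(k-1) + \sum_i a_i - n = t_k - 1$.

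Consequently every monomial of degree $t_k$ lies in $\qq^k$, so $\mm^{t_k} \subseteq \qq^k$, while the extremal monomial $x_{i_0}^{a_{i_0}k - 1}\prod_{i \neq i_0} x_i^{a_i - 1}$ has degree $t_k - 1$, floor-sum $k-1$, and hence is not in $\qq^k$, so $\mm^{t_k - 1} \not\subseteq \qq^k$. For part (a), the inclusion $\mm^{t_k} = \mm\cdot\mm^{t_k - 1} \subseteq \qq^k$ gives $\mm \subseteq \qq^k \colon \mm^{t_k - 1}$, whereas $\mm^{t_k - 1} \not\subseteq \qq^k$ gives $1 \notin \qq^k \colon \mm^{t_k-1}$, so the colon ideal is proper. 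Since $\mm$ is maximal, these two facts force $\qq^k \colon \mm^{t_k - 1} = \mm$.

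For part (b), the inclusion $\mm^{t_k}\subseteq \qq^k$ gives $1 \in \qq^k \colon \mm^{t_k}$, hence $\qq^k \colon \mm^{t_k} = S = \qq^k \colon \mm^{t_k+1}$. Combined with part (a), the ascending chain $\qq^k\subseteq \qq^k\colon\mm\subseteq\cdots$ satisfies $\qq^k \colon \mm^{t_k-1} = \mm \subsetneq S = \qq^k\colon\mm^{t_k}$, so it has not stabilized at step $t_k - 1$ but is constant from step $t_k$ onward. Using the standard fact that $\qq^k\colon\mm^{j} = \qq^k\colon\mm^{j+1}$ implies $\qq^k\colon\mm^{j+1} = \qq^k\colon\mm^{j+2}$, the set of stabilization indices is upward closed; since $t_k-1$ is not such an index and $t_k$ is, the minimum is exactly $t_k$, giving $\sat(\qq^k) = t_k$.
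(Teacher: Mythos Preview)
Your proof is correct and follows the same overall strategy as the paper: both arguments rest on the membership criterion that $\prod_i x_i^{c_i}\in\qq^k$ if and only if $\sum_i\lfloor c_i/a_i\rfloor\ge k$, and both deduce that $\mm^{t_k}\subseteq\qq^k$ while $\mm^{t_k-1}\not\subseteq\qq^k$ via the same extremal monomial $x_{i_0}^{ka_{i_0}-1}\prod_{i\ne i_0}x_i^{a_i-1}$.

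The difference is in how the inclusion $\mm^{t_k}\subseteq\qq^k$ is established. You phrase it as the optimization problem of maximizing $\sum_i c_i$ subject to $\sum_i\lfloor c_i/a_i\rfloor\le k-1$, solve it cleanly by the quotient--remainder decomposition $c_i=a_iq_i+r_i$, and read off the maximum $t_k-1$. The paper instead fixes a monomial $u$ of degree $t_k-1$, supposes $x_lu\notin\qq^k$ for some $l$, and derives a contradiction through a chain of inequalities involving the same floor quantities $\alpha_i=\lfloor b_i/a_i\rfloor$. Your optimization framing is more transparent and shorter; the paper's contradiction argument is equivalent but buries the structure. For part (b) you also avoid the paper's auxiliary step of analyzing $\qq^k\colon\mm^{t_k-2}$: once you know $\qq^k\colon\mm^{t_k-1}=\mm\subsetneq S=\qq^k\colon\mm^{t_k}$, the upward-closedness of stabilization indices (which you invoke explicitly) already pins down $\sat(\qq^k)=t_k$.
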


\begin{proof} 
Without loss of generality we may assume that $a_1 \geq \cdots \geq a_n \geq 1$. Let $u=x_1^{b_1} \cdots  x_n^{b_n}$ be a monomial in $S$.  It is clear that $u \in \qq^k$ if and only if there exist non-negative integers $c_1, \ldots , c_n$ with  $\sum_{i=1}^{n}c_i=k$ such that $a_ic_i \leq b_i$ for all $i$. In other words, $u \notin \qq^k$ if and only if for all $1 \leq i \leq n$ one has
\begin{equation} \label{m}
b_i <(k-(\alpha_1+ \cdots +\alpha_{i-1}+\alpha_{i+1}+ \cdots +\alpha_{n}))a_i, 
\end{equation}
for all non-negative integers $\alpha_1, \ldots ,\alpha_{i-1},\alpha_{i+1}, \ldots ,\alpha_{n}$ with the property that $\alpha_{j}a_j\leq b_j$ for $j=1, \ldots, i-1, i+1, \ldots, n$.

(a) Let $k \geq 1$. Obviously $x_1^{ka_1-1}x_2^{a_2-1}\cdots x_n^{a_n-1}$ is in $\mm^{t_k-1}$ but not in $\qq^k$. Thus $\qq^k\colon\mm^{t_k-1} \neq (1)$ and hence $\qq^k\colon\mm^{t_k-1}\subseteq \mm$. For the reverse inclusion, let $u=x_1^{b_1} \ldots  x_n^{b_n} \in \mm^{t_k-1}$. We show that  $x_iu \in \qq^k$  for all $1 \leq i \leq n$. Suppose that $x_lu \notin \qq^k$  for some $l$. So $x_1^{b_1} \ldots x_{l-1}^{b_{l-1}} x_l^{b_l+1}x_{l+1}^{b_{l+1}} \ldots  x_n^{b_n} \notin \qq^k$. Put $\alpha_{i}=\lfloor \frac{b_i}{a_i} \rfloor$ for $1\leq i \leq n$. Using \eqref{m}, we get 
\begin{align*}
& b_l+1<(k-(\alpha_1+ \cdots +\alpha_{l-1}+\alpha_{l+1}+ \cdots +\alpha_{n}))a_l, \text{ and }\\
& b_i <(k-(\alpha_1+ \cdots +\alpha_{i-1}+\alpha_{i+1}+ \cdots +\alpha_{n}))a_i
\end{align*}
for all $i \neq l$. Since $\sum_{i=1}^{n}b_i+1=t_k=ka_1+ \sum\limits_{i=2} ^{n}a_i-n+1$, it follows that
\begin{equation*}
ka_1+ \sum_{i=2}^{n}a_i -n=\sum_{i=1}^{n}b_i\leq ka_1 +k\sum_{i=2}^{n}a_i-(n+1)-\sum_{i=1}^{n}(a_i\sum\limits_{j=1 \atop j\neq i}\alpha_j).
\end{equation*}
Thus,
\begin{equation}\label{t}
\sum_{i=2}^{n}(k-1)a_i\geq 1+\sum_{i=1}^{n}(a_i\sum\limits_{j=1 \atop j\neq i}\alpha_j) \geq 1+\sum_{i=2}^{n}(a_i\sum_{j=1}^n\alpha_j),
\end{equation}
where the  last inequality follows from the assumption $a_1 \geq a_i$ for all $i \geq 2$. 

On the other hand, since $\alpha_ia_i+(a_i-1)=\lfloor \frac{b_i}{a_i} \rfloor a_i+(a_i-1) \geq b_i$ for all $i$, it follows that 
\begin{align*}
(k-1)a_1+\sum_{i=1}^n(a_i-1) &= ka_1+ \sum\limits_{i=2} ^{n}a_i-n=\sum_{i=1}^{n}b_i \\
& \leq  \sum_{i=1}^n\alpha_ia_i+\sum_{i=1}^n(a_i-1) \leq (\sum_{i=1}^n\alpha_i)a_1+\sum_{i=1}^n(a_i-1).
\end{align*}
Therefore, $\sum_{i=1}^n\alpha_i \geq k-1$. Now, using \eqref{t} we get $\sum_{i=2}^{n}(k-1)a_i \geq 1 +\sum_{i=2}^{n}(k-1)a_i$, a contradiction.

(b) In the case that $\qq=\mm$ and $k=1$ one can easily see that $\sat(\qq)=1=t_1$. Assume that $a_1 \geq 2$ and $k \geq 1$. We show that $\qq^k\colon\mm^{t_k-2} \subsetneq \mm $. It is clear that 
$\qq^k\colon\mm^{t_k-2} \neq (1)$ because $u=x_1^{ka_1-2}x_2^{a_2-1} \cdots  x_n^{a_n-1} \in \mm^{t_k-2} \setminus \qq^k$. Hence $\qq^k\colon\mm^{t_k-2} \subseteq \mm$. On the other hand, $x_1u\notin\qq^k$. Therefore, $\mm \nsubseteq \qq^k \colon \mm^{t_k-2}$. Thus $\qq^k\colon\mm^{t_k-2} \subsetneq \mm $. This implies that $\qq^k\colon\mm^l \neq \qq^k\colon\mm^{l+1}$ for $ l<t_k$ since $ \qq^k\colon\mm^{l}=(\qq^k\colon\mm^{l-1})\colon\mm$ for all $l\geq 1$. On the other hand,  it follows from (a) that $ \qq^k\colon\mm^{l}=S$  for all $ l\geq t_k$.  Thus, $\sat (\qq^k)=t_k$.
\end{proof}

\begin{Remark}
In Theorem~\ref{main}, if $a_i=0$ for some $i$, then $\sat (\qq)=0 $, since $\depth(S/\qq)>0$.
\end{Remark}

\begin{Corollary} \label{satreg}
Let $I=\qq_1 \cap \cdots \cap \qq_r$ be an irreducible decomposition of a monomial ideal $I$. If $t=\max\{\sat(\qq_i) \colon\; i=1, \ldots , r\}$ then
\begin{enumerate}
\item[(a)] $\sat(I) \leq t$.
\item[(b)] $\sat(I)= t$ if and only if $I$ is an $\mm$-primary monomial ideal.
\end{enumerate}
\end{Corollary}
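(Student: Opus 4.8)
The plan is to reduce the whole statement to Theorem~\ref{main} by means of the elementary identity
\[ I\colon\mm^k=\Big(\bigcap_{i=1}^r\qq_i\Big)\colon\mm^k=\bigcap_{i=1}^r\big(\qq_i\colon\mm^k\big), \]
which holds for every $k\ge 0$ because forming colon ideals commutes with intersection. Each irreducible component is of one of two types. If $\qq_i$ involves all $n$ variables it is $\mm$-primary, and Theorem~\ref{main} describes its chain exactly: $\qq_i\colon\mm^k$ is proper for $k<t_i$ and equals $S$ for $k\ge t_i$, where $t_i:=\sat(\qq_i)$, with the decisive intermediate value $\qq_i\colon\mm^{\,t_i-1}=\mm$ furnished by part~(a) of that theorem. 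If instead some variable is absent from $\qq_i$, then that variable is a nonzerodivisor on $S/\qq_i$, so $\depth(S/\qq_i)>0$, $\qq_i$ is saturated, and $\qq_i\colon\mm^k=\qq_i$ for all $k$. In either case the $i$-th factor has stabilized by step $t_i\le t$.

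Granting this, part~(a) is immediate: for $k\ge t$ every factor has already stabilized, so $I\colon\mm^{t}=\bigcap_i(\qq_i\colon\mm^{t})=\bigcap_i(\qq_i\colon\mm^{t+1})=I\colon\mm^{t+1}$, whence $\sat(I)\le t$. For the forward implication of~(b) I would first note that $I$ is $\mm$-primary if and only if every $\qq_i$ is $\mm$-primary, since $\sqrt{I}=\bigcap_i\sqrt{\qq_i}$ equals $\mm$ exactly when each variable lies in every $\sqrt{\qq_i}$, i.e.\ when each $\sqrt{\qq_i}=\mm$. Assuming $I$ is $\mm$-primary, pick $i_0$ with $t_{i_0}=t$ and intersect the values at level $t-1$: the components with $t_i=t$ each contribute $\mm$ by Theorem~\ref{main}(a), while those with $t_i<t$ contribute $S$, so $I\colon\mm^{t-1}=\mm\subsetneq S=I\colon\mm^{t}$. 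Thus $\sat(I)\ge t$, and with part~(a) this yields $\sat(I)=t$.

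The reverse implication of~(b), phrased as $\sat(I)<t$ whenever $I$ is not $\mm$-primary, is the step I expect to be the main obstacle, and it is precisely where the sharp form of Theorem~\ref{main}(a)---that the intermediate value is \emph{equal} to $\mm$, not merely some proper ideal---is needed. If $I$ is not $\mm$-primary then some component misses a variable, so the stable value $I^{\sat}=I\colon\mm^{k}$ (for $k\gg0$) is the intersection of the non-$\mm$-primary components, a proper ideal contained in $\mm$. When at least one component is $\mm$-primary (so that $t\ge1$), evaluating at level $t-1$ again collapses all the $\mm$-primary factors to $\mm$, which already contains $I^{\sat}$; hence $I\colon\mm^{t-1}=I^{\sat}=I\colon\mm^{t}$ and $\sat(I)\le t-1<t$, as desired. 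The only degenerate case is when $I$ has no $\mm$-primary component: then $t=0$, $I$ is saturated, and the equality $\sat(I)=t$ reads $0=0$, so the substantive content of~(b) is confined to $t\ge1$, which the preceding argument settles.
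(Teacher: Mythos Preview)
Your argument is correct and follows essentially the same route as the paper's proof: both split the components into $\mm$-primary and non-$\mm$-primary pieces, use the identity $I\colon\mm^k=\bigcap_i(\qq_i\colon\mm^k)$, and invoke Theorem~\ref{main}(a) to collapse the $\mm$-primary factors to~$\mm$ at level $t-1$. Your forward implication of (b) is in fact slightly cleaner than the paper's (you use $I\colon\mm^{t-1}=\mm\ne S=I\colon\mm^{t}$ directly, whereas the paper reaches back to $\qq_s\colon\mm^{t-2}\subsetneq\mm$ via the proof of Theorem~\ref{main}(b)), and you correctly flag the degenerate case $t=0$ (no $\mm$-primary components), where $\sat(I)=0=t$ despite $I$ not being $\mm$-primary---an edge case the paper's proof silently passes over.
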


\begin{proof}
We may assume that $I=(\cap _{i=1}^{r_0}\qq_i)\cap(\cap _{r_0+1}^{r}\qq_i)$ where  $\sqrt \qq_i \neq \mm$  for $i=1, \ldots r_0$ and $ \sqrt \qq_i =\mm$ for $i=r_0+1, \ldots r$. Let $t= \max\{\sat(\qq_i) \colon\; i=1, \ldots , r\}$. Then
\[
I\colon\mm^{t-1}=\left(\bigcap _{i=1}^{r_0}(\qq_i\colon\mm^{t-1}) \right)\cap\left(\bigcap _{r_0+1}^{r}(\qq_i\colon\mm^{t-1})\right).
\]
Note that, $\qq_i\colon\mm^{t-1}=\qq_i$ for $i=1, \ldots, r_0$, since $\qq_i$ is saturated.  On the other hand, it follows from Theorem~\ref{main}(a) that $\cap _{r_0+1}^{r}(\qq_i\colon\mm^{t-1})=\mm$  and hence, $\cap _{r_0+1}^{r}(\qq_i\colon\mm^{t})=S$.  Therefore, 
\[I\colon\mm^{t-1}=
\begin{cases} \bigcap\limits_{i=1}^{r_0}\qq_i, & \text{if } r_0>0\\
\mm, & \text{O.W.}
\end{cases}\]

(a) The above discussion shows that $I\colon\mm^t=I\colon\mm^{t+1}$. Thus, $\sat(I) \leq t$.

(b) If $I$ is not $\mm$-primary, then $r_0>0$ and the above discussion shows that $I\colon\mm^{t-1}=I\colon\mm^{t}$. Hence, $\sat(I) \leq t-1<t$. For the converse, let $s \in \{ 1, \ldots , r\}$ be such that $t=\sat (\qq_s)$. As it is shown in the proof of Theorem~\ref{main}(b), we have $\qq_s\colon\mm^{t-2} \subsetneq \mm$. Therefore, $I\colon\mm^{t-2} \subsetneq \mm$ and hence $I\colon\mm^l \neq I\colon\mm^{l+1}$ for all $l<t$. This implies that $\sat(I)\geq t$. In conjunction with (a) we get the required result.
\end{proof}

\begin{Remark}\mbox{}
\begin{itemize}
\item[(i)] 
We may have the strict inequality in Corollary~\ref{satreg}(a). For example, let $I=(x_1^2,x_2)\cap (x_1,x_2^2)\cap (x_1^3,x_2^2,x_3^2) \subset \KK[x_1,x_2,x_3]$. Then, $\max\{\sat(\qq_i) \colon\; i=1,2,3\}=5$ by Theorem~\ref{main}(b). However, one can check (by \cocoa \cite{Co}) that $\sat (I)=3$.

\item[(ii)] The only if direction of Corollary~\ref{satreg}(b) can be deduced from Corollary 3.17 and Theorem 1.1 in \cite{ib-ptg} as follows: For a graded $S$-module $M$, let $M_j$ denote the additive subgroup of $M$ consisting of homogeneous elements of degree $j$ and $\mathrm{end}(M)=\max\{j \colon \, M_j \neq 0\}$. Recall from \cite{ib-ptg} that a monomial ideal $I$ is \textit{of nested type} if for any prime ideal $\pp$ associated to $I$, there exists $i\in \{1, \ldots, n\}$ such that $\pp = (x_1, \ldots, x_i)$. Also, the number $\mathrm{end} (I^\sat/I)+1$ is called the \textit{satiety} of $I$. 

Let $I$ be an $\mm$-primary monomial ideal. Then it is clear that $I$ is of nested type and that $\mm^n \subseteq I$ for some positive integer $n$. So that $I^\sat=R$. Let $t_0=\min\{n \colon\; \mm^n \subseteq I\}$. Then
\begin{equation}\label{sat of m-primary}
\begin{split}
& I \colon \mm^{t_0-1} \subsetneq R, \text{ and} \\
& I \colon \mm^{j} =R \quad \text{for all } j\geq t_0.
\end{split}
\end{equation}
This shows that $\sat(I)=t_0$. On the other hand, \eqref{sat of m-primary} implies that 
\begin{equation}
\begin{split}
& I_{t_0-1} \subsetneq R_{t_0-1}, \text{ and}\\
& I_{j} =R_j \quad \text{for all } j\geq t_0,
\end{split}
\end{equation}
which shows that 
\begin{equation}\label{satiety is sat for m-primary}
\mathrm{end}(I^\sat/I)=\mathrm{end}(R/I)=t_0-1=\sat(I)-1.
\end{equation}
Thus the satiety of $I$ is indeed $\sat(I)$ in this case. In addition, \cite[Theorem 1.1]{ib-ptg} and \eqref{satiety is sat for m-primary} implies that $\sat(I)=\reg(I)$, where $\reg(I)$ is the Castelnuovo-Mumford regularity of $I$. Now, applying \cite[Corollary 3.17]{ib-ptg} we conclude that:
\begin{align*}
\sat(I)=\reg(I)&=\max\{\reg(\qq_i)\colon \; i=1, \ldots, r\}\\
& =\max\{\sat(\qq_i)\colon \; i=1, \ldots, r\},
\end{align*}
where $\qq_1 \cap \cdots \cap \qq_r$ is an irredundant irreducible decomposition of $I$.
\end{itemize}
\end{Remark}

\begin{Corollary}\label{sat leq primary sat}
Let $I=\qq_1 \cap \cdots \cap \qq_r$ be a primary decomposition of a monomial ideal $I$. Then,
\[  \sat(I) \leq \max\{\sat(\qq_i) \colon\; i=1, \ldots , r\}. \]
\end{Corollary}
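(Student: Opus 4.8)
The plan is to reduce the statement to the bookkeeping already carried out in the proof of Corollary~\ref{satreg}(a), observing that the only properties of the \emph{irreducible} components used there in fact hold for arbitrary \emph{primary} monomial components. First I would split the given primary decomposition according to the radicals of the components: reorder so that
\[
I=\Bigl(\bigcap_{i=1}^{r_0}\qq_i\Bigr)\cap\Bigl(\bigcap_{i=r_0+1}^{r}\qq_i\Bigr),
\]
where $\sqrt{\qq_i}\neq\mm$ for $i=1,\ldots,r_0$ and $\sqrt{\qq_i}=\mm$ (that is, $\qq_i$ is $\mm$-primary) for $i=r_0+1,\ldots,r$.

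The two facts I need are: (1) a primary monomial ideal $\qq$ with $\sqrt{\qq}\neq\mm$ is saturated; and (2) an $\mm$-primary ideal $\qq$ satisfies $\qq\colon\mm^k=S$ for every $k\geq \sat(\qq)$. For (1), if $\qq$ is $\pp$-primary with $\pp\neq\mm$ then $\mathrm{Ass}(S/\qq)=\{\pp\}$, so $\mm\notin\mathrm{Ass}(S/\qq)$, whence $\depth(S/\qq)>0$; as recalled in the introduction this forces $\sat(\qq)=0$, i.e. $\qq\colon\mm=\qq$ and therefore $\qq\colon\mm^k=\qq$ for all $k$. For (2), an $\mm$-primary ideal contains a power of $\mm$, so $\qq^{\sat}=S$, and by the very definition of the saturation number we have $\qq\colon\mm^k=\qq^{\sat}=S$ for all $k\geq\sat(\qq)$.

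Setting $t=\max\{\sat(\qq_i)\colon i=1,\ldots,r\}$ and using that ideal quotients commute with finite intersections, I would then write, for any $k$,
\[
I\colon\mm^k=\Bigl(\bigcap_{i=1}^{r_0}(\qq_i\colon\mm^k)\Bigr)\cap\Bigl(\bigcap_{i=r_0+1}^{r}(\qq_i\colon\mm^k)\Bigr).
\]
Taking $k\in\{t,t+1\}$, each factor in the first group equals $\qq_i$ by (1), and each factor in the second group equals $S$ by (2), since $t\geq\sat(\qq_i)$. Hence $I\colon\mm^t=\bigcap_{i=1}^{r_0}\qq_i=I\colon\mm^{t+1}$, and therefore $\sat(I)\leq t$, as claimed.

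The argument is essentially routine once the radical-based split is made; the only point requiring care is verifying fact (1) for primary rather than irreducible components, together with the observation that the stabilization step of Corollary~\ref{satreg}(a) goes through verbatim. I would emphasize that, in contrast to the irreducible case treated in Corollary~\ref{satreg}(b), no converse should be expected here: a primary decomposition may contain redundant or embedded components whose saturation numbers strictly exceed that of $I$, so only the inequality can be asserted.
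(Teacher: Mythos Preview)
Your argument is correct. It differs from the paper's route: rather than refining each primary component $\qq_i$ into irreducible pieces $\qq_{i,j}$ and then invoking Corollary~\ref{satreg} twice---part~(a) applied to $I$ to get $\sat(I)\leq\max_{i,j}\sat(\qq_{i,j})$, and part~(b) (or the trivial non-$\mm$-primary case) applied to each $\qq_i$ to identify $\max_j\sat(\qq_{i,j})$ with $\sat(\qq_i)$---you work directly with the primary components, rerunning the colon-ideal computation from the proof of Corollary~\ref{satreg}(a) at the primary level. Your approach is more self-contained, since facts~(1) and~(2) are elementary and no appeal to Theorem~\ref{main} is needed; the paper's argument is shorter on the page precisely because it reuses the irreducible machinery already established. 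Both are valid; yours makes transparent that the inequality depends only on the dichotomy $\sqrt{\qq_i}=\mm$ versus $\sqrt{\qq_i}\neq\mm$, not on any finer structure of the components.
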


\begin{proof}
Assume that $\qq_i=\cap _{j=1}^{s_i}\qq_{i,j}$ is an irredundant irreducible decomposition of $\qq_i$ for $ i=1, \ldots , r$. Then $I=\cap _{i=1}^{r} \cap_{j=1}^{s_i}\qq_{i,j}$ is an  irreducible decomposition of $I$. Hence,
\[  \sat(I) \leq \max\{\sat(\qq_{i,j}) \colon\; i=1, \ldots , r ,j=1, \ldots , s_i\}=\max\{\sat(\qq_i) \colon\; i=1, \ldots , r\},\]
by Corollary~\ref{satreg}(a)
\end{proof}

In the following, we apply our results to study the saturation number of two different symbolic powers of $I$ as namely $I^{(k)}$ and $I^{\{k\}}$ (see Definitions~\ref{standard symbolic power}, \ref{new symbolic power}).
\begin{Definition} \label{standard symbolic power}
Let $I$ be an ideal of a ring $R$ and let $\pp_1, \ldots \pp_r$ be the minimal primes of $I$. For any given integer $k\geq 1 $  the \textit{$k$-th symbolic power} of $I$ is denoted by $I^{(k)}$ and defined as 
\[
I^{(k)}=\bigcap _{i=1}^{r}\qq_i=\bigcap _{i=1}^{r}(I^kR_{\pp_i}\cap R),
\]
where $\qq_i$ is the $\pp_i$-primary component of $I^k$ for $i=1, \ldots, r$.
\end{Definition}

\begin{Corollary} \label{pr}
Let $I=\qq_1 \cap \cdots \cap \qq_r$ be a primary  decomposition of a monomial ideal $I$.
\begin{enumerate}
\item[(a)] $\sat(I^{(k)}) \leq \max\{\sat(\qq_i^k) \colon\; \sqrt{\qq_i} \in \Min(I)\}$.
\item[(b)] If $I$ is squarefree, then 
\[
\sat(I^{(k)})= \begin{cases}
k, & \text{if } I=\mm,\\
0, & \text{O.W.}
\end{cases}
\]
\end{enumerate}
\end{Corollary}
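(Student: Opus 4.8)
The plan is to realize the symbolic power as an explicit primary decomposition and then quote Corollary~\ref{sat leq primary sat}. By Definition~\ref{standard symbolic power}, $I^{(k)} = \bigcap_{\pp_i \in \Min(I)}(I^k R_{\pp_i} \cap R)$. First I would localize: for a minimal prime $\pp_i$ of $I$, the minimality of $\pp_i$ guarantees that every primary component $\qq_j$ of $I$ with $\sqrt{\qq_j} \neq \pp_i$ becomes the unit ideal in $R_{\pp_i}$, so $I R_{\pp_i} = \qq_i R_{\pp_i}$, where $\qq_i$ denotes the (unique) $\pp_i$-primary component of $I$. Raising to the $k$-th power and contracting, the $\pp_i$-primary component of $I^k$ is $Q_i := I^k R_{\pp_i} \cap R = \qq_i^k R_{\pp_i} \cap R$, and $I^{(k)} = \bigcap_{\pp_i \in \Min(I)} Q_i$ is a primary decomposition. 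Corollary~\ref{sat leq primary sat} then gives $\sat(I^{(k)}) \le \max\{\sat(Q_i) : \pp_i \in \Min(I)\}$, so the whole of part~(a) reduces to the inequality $\sat(Q_i) \le \sat(\qq_i^k)$ for each minimal $\pp_i$.

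To establish this inequality I would distinguish two cases according to whether $\pp_i = \mm$. If $\pp_i \neq \mm$, then $Q_i$ is $\pp_i$-primary with $\mm \notin \mathrm{Ass}(S/Q_i)$, hence $\depth(S/Q_i) > 0$ and $\sat(Q_i) = 0$; on the other hand $\qq_i$ is a $\pp_i$-primary monomial ideal, so (see the obstacle below) it involves only the variables appearing in $\pp_i$, whence $\qq_i^k$ has positive depth and $\sat(\qq_i^k) = 0$ as well. If $\pp_i = \mm$, then the minimality of $\mm$ forces $I$ to be $\mm$-primary with single component $\qq_i = I$; since $I^k$ is then $\mm$-primary we get $Q_i = I^k R_\mm \cap R = I^k = \qq_i^k$, so $\sat(Q_i) = \sat(\qq_i^k)$. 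In both cases $\sat(Q_i) \le \sat(\qq_i^k)$, and feeding this into the displayed bound gives $\sat(I^{(k)}) \le \max\{\sat(\qq_i^k) : \sqrt{\qq_i} \in \Min(I)\}$, which is part~(a).

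The step I expect to be the most delicate is the auxiliary claim that a $\pp$-primary monomial ideal $\qq$ with $\pp \neq \mm$ uses only the variables in $\pp$. I would prove it by contradiction: if a minimal generator of $\qq$ is divisible by some $x_m \notin \pp$, writing it as $x_m^c w$ with $x_m \nmid w$ and $c \geq 1$ gives $x_m^c w \in \qq$ while $x_m^c \notin \sqrt{\qq} = \pp$ and $w \notin \qq$ (by minimality), contradicting primariness of $\qq$. With this in hand, $\qq^k$ lives in a proper subset of the variables, so a missing variable is a nonzerodivisor on $S/\qq^k$ and $\sat(\qq^k)=0$.

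For part~(b) I would specialize~(a). A squarefree monomial ideal has no embedded primes, and its primary decomposition is $I = \bigcap_i \pp_i$ with each component equal to its radical $\pp_i \in \Min(I)$; thus~(a) reads $\sat(I^{(k)}) \le \max_i \sat(\pp_i^k)$. If $I = \mm$, then $\Min(I) = \{\mm\}$ and $I^{(k)} = \mm^{(k)} = \mm^k$, so Theorem~\ref{main}(b), applied with all exponents equal to $1$ (giving $t_k = k$), yields $\sat(I^{(k)}) = k$. If $I \neq \mm$, then every $\pp_i \neq \mm$, so each $\sat(\pp_i^k) = 0$ by the case analysis above, and together with the trivial bound $\sat(I^{(k)}) \geq 0$ this forces $\sat(I^{(k)}) = 0$.
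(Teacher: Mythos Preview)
Your proof is correct. Part~(b) matches the paper's argument almost verbatim. For part~(a), however, you take a somewhat different route. The paper invokes \cite[Lemma~2]{GMSVV} to obtain directly that $I^{(k)}=\bigcap_{\sqrt{\qq_i}\in\Min(I)}\qq_i^k$ is a primary decomposition, and then simply applies Corollary~\ref{sat leq primary sat}. You instead work from Definition~\ref{standard symbolic power}, producing the primary decomposition $I^{(k)}=\bigcap Q_i$ with $Q_i=I^kR_{\pp_i}\cap R$, and then compare $\sat(Q_i)$ with $\sat(\qq_i^k)$ by a case split on whether $\pp_i=\mm$. This trades the external citation for a short self-contained argument; the cost is the extra case analysis and the auxiliary claim that a $\pp$-primary monomial ideal uses only the variables of $\pp$, which you handle correctly. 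In fact your argument almost shows $Q_i=\qq_i^k$ outright (since $\qq_i^k$ is again $\pp_i$-primary for monomial ideals), which would recover the cited lemma and make the two proofs converge.
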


\begin{proof}
(a) Let $I=(\cap_{i=1}^{s}\qq_i )\cap (\cap _{i=s+1}^{r}\qq_i)$ be a primary decomposition of $I$ and $\Min(I)=\{\sqrt{\qq_1}, \ldots, \sqrt{\qq_s}\}$. By \cite[Lemma 2]{GMSVV} we have $I^{(k)}=\cap _{i=1}^{s}\qq_i ^k$. Since $I^{(k)}=\cap _{i=1}^{s}\qq_i ^k$ is a primary decomposition of $I^{(k)}$, in view of Corollary~\ref{sat leq primary sat} we get the desired result.

(b) Assume that $I$ is squarefree. If $I=\mm$, then $I^{(k)}=I^k=\mm^k$, and $\sat(I^k)=k$ obviously. Let $I \neq \mm$ and $I=\cap_{i=1}^{r}\pp_i$ be an irredundant prime decomposition of $I$. Then $\pp_i \neq \mm$ for all $i$, and hence $\depth(S/\pp_i^k)>0$. Therefore, $\sat(\pp_i^k)=0$  for all $i$. Now, the assertion follows from (a).
\end{proof}

\begin{Definition}\label{new symbolic power}
Let $I=\qq_1 \cap \cdots \cap \qq_r$ be an irredundant irreducible decomposition of the monomial ideal $I$. We define the \textit{$\{k\}$-th symbolic power} $I^{\{k\}}$ of $I$ as follows:
\[
I^{\{k\}}:=\bigcap _{i=1}^{r}\qq_i^k.
\]
\end{Definition}

\begin{Proposition}
\label{symb}
Let $I=\qq_1 \cap \cdots \cap \qq_r$ be an irredundant irreducible decomposition of a monomial ideal $I$. Then for all $k \geq 1$ we have
\begin{enumerate}
\item[(a)] $I^k \subseteq I^{\{k\}}$.
\item[(b)] $\sat(I^{\{k\}}) \leq \max\{\sat(\qq_i^k) \colon\; i=1, \ldots , r\}$.
\item[(c)] If $I$ is $\mm$-primary, then 
\[  \sat(I^{\{k\}})= \max\{\sat(\qq_i^k) \colon\; i=1, \ldots , r\}. \]
\end{enumerate}
\end{Proposition}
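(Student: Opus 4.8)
The plan is to dispose of the three parts in turn, relying on the component-wise saturation results already proved. Part (a) is immediate from the decomposition: since $I=\qq_1\cap\cdots\cap\qq_r$ gives $I\subseteq\qq_i$ for each $i$, taking $k$-th powers yields $I^k\subseteq\qq_i^k$, and intersecting over $i$ gives $I^k\subseteq\bigcap_{i=1}^r\qq_i^k=I^{\{k\}}$.

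For part (b) the strategy is to exhibit $I^{\{k\}}=\bigcap_i\qq_i^k$ as a primary decomposition and then quote Corollary~\ref{sat leq primary sat}. The only point needing argument is that each $\qq_i^k$ is primary. Writing $\qq_i=(x_{j_1}^{a_1},\ldots,x_{j_s}^{a_s})$ with $\pp_i=\sqrt{\qq_i}=(x_{j_1},\ldots,x_{j_s})$, I would pass to the subring $A=\KK[x_{j_1},\ldots,x_{j_s}]$, in which $\qq_i\cap A$ has radical equal to the maximal ideal of $A$; hence $(\qq_i\cap A)^k$, having maximal radical, is primary, and extending back to $S$ along the remaining variables preserves primariness, so $\qq_i^k$ is $\pp_i$-primary. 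With this in place, $\bigcap_i\qq_i^k$ is a bona fide primary decomposition of $I^{\{k\}}$, and Corollary~\ref{sat leq primary sat} yields $\sat(I^{\{k\}})\leq\max_i\sat(\qq_i^k)$.

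For part (c) assume $I$ is $\mm$-primary. Then $\sqrt{I}=\bigcap_i\pp_i=\mm$ forces $\pp_i=\mm$ for every $i$, so each $\qq_i$, each $\qq_i^k$, and therefore $I^{\{k\}}$ is $\mm$-primary; consequently the colon chain of $I^{\{k\}}$ strictly ascends until it reaches $S$, and $\sat(I^{\{k\}})$ is the first index at which it equals $S$. The inequality $\leq$ is part (b), so it remains to prove $\geq$, and for this I would follow the template of the proof of Corollary~\ref{satreg}(b). Choosing $s$ with $t:=\sat(\qq_s^k)=\max_i\sat(\qq_i^k)$, I recall from the proof of Theorem~\ref{main}(b) that $\qq_s^k\colon\mm^{t-2}\subsetneq\mm$; since $I^{\{k\}}\subseteq\qq_s^k$ this gives $I^{\{k\}}\colon\mm^{t-2}\subseteq\qq_s^k\colon\mm^{t-2}\subsetneq\mm$. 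As $\mm\not\subseteq I^{\{k\}}\colon\mm^{t-2}$, the equivalence $(J\colon\mm=S)\Leftrightarrow(\mm\subseteq J)$ shows $I^{\{k\}}\colon\mm^{t-1}\neq S$, so the $\mm$-primary chain has not stabilized by step $t-1$ and $\sat(I^{\{k\}})\geq t$.

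The step I expect to be the crux is the primariness of $\qq_i^k$ used in part (b): it is exactly what guarantees that every irreducible component of $\qq_i^k$ carries the radical $\pp_i$, so that no such component is $\mm$-primary unless $\qq_i^k$ itself is, and it is this that keeps the relevant saturation numbers from overshooting $\max_i\sat(\qq_i^k)$; equivalently, it is what licenses the direct appeal to Corollary~\ref{sat leq primary sat} rather than the weaker Corollary~\ref{satreg}(a). A minor boundary case to check separately is $t\leq 1$ in part (c) --- essentially $I=\mm$ and $k=1$ --- where the index $t-2$ is not meaningful and the asserted equality is verified by hand.
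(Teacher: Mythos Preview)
Your argument is correct and follows the paper's approach for parts (a) and (b) almost verbatim: the paper, like you, observes that $I^k\subseteq\qq_i^k$ for each $i$ (arguing elementwise with monomials rather than via ideal containment, but this is the same thing), and for (b) it simply asserts that $\bigcap_i\qq_i^k$ is a primary decomposition and invokes Corollary~\ref{sat leq primary sat}; your added justification that $\qq_i^k$ is primary fills a detail the paper leaves implicit.

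For part (c) there is a small but genuine difference in packaging. The paper does \emph{not} replay the proof of Corollary~\ref{satreg}(b); instead it refines each $\qq_i^k$ into irreducibles $\qq_{i,j}$, notes that $I^{\{k\}}=\bigcap_{i,j}\qq_{i,j}$ is an irreducible decomposition of an $\mm$-primary ideal, and then applies Corollary~\ref{satreg}(b) twice --- once to $I^{\{k\}}$ and once to each $\qq_i^k$ --- to get
\[
\sat(I^{\{k\}})=\max_{i,j}\sat(\qq_{i,j})=\max_i\sat(\qq_i^k).
\]
Your route, by contrast, bypasses the refinement and re-derives the lower bound $\sat(I^{\{k\}})\geq t$ directly from the observation $\qq_s^k\colon\mm^{t-2}\subsetneq\mm$ in the proof of Theorem~\ref{main}(b). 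Both arguments are sound; the paper's is slicker because Corollary~\ref{satreg}(b) has already done the work, while yours is more self-contained and avoids introducing the auxiliary $\qq_{i,j}$.
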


\begin{proof}
(a) Let $u$ be a monomial in $I^k$. Then $u=u_1u_2 \ldots u_k$ such that $u_j$ is a monomial in $I$ for $j=1, \ldots, k$. Therefore, $u_j \in \qq_i$ for $j=1, \ldots , k$ and $i=1, \ldots, r$. Thus, $u \in \qq_i^k$ for $i=1, \ldots, r$ and hence $u \in I^{\{k\}}$.

(b) Since $I^{\{k\}}=\cap _{i=1}^{r}\qq_i^k$  is a primary decomposition of $I^{\{k\}}$, the assertion follows from Corollary~\ref{sat leq primary sat}.

(c) Let $I$ be  $\mm$-primary and $I=\cap _{i=1}^{r}\qq_i$ be an irredundant irreducible decomposition of $I$. Assume that $I^{\{k\}}=\cap _{i=1}^{r} (\cap _{j=1}^{s_i}\qq_{i,j})$ where $\qq_i^k=\cap _{j=1}^{s_i}\qq_{i,j}$ is an irredundant decomposition of $\qq_i^k$ for $ i=1, \ldots , r$. Using Corollary~\ref{satreg}(b)  we get
\[  \sat(I^{\{k\}}) =\max\{\sat(\qq_{i,j}) \colon\; i=1, \ldots , r ,j=1, \ldots , s_i\}=\max\{\sat(\qq_i^k) \colon\; i=1, \ldots , r\}. \qedhere
\]
\end{proof}

\begin{Proposition} \label{satsat}
Let $I$ be an $\mm$-primary monomial ideal and $I=\qq_1 \cap \cdots \cap \qq_r$ be an irredundant irreducible decomposition of $I$. Then 
\[ \sat(I^{\{k\}})=\max\{\sat(\qq_i^k) \colon\; i=1, \ldots , r\} \leq \sat(I^k). \]
\end{Proposition}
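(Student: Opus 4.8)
The plan is to split the statement into its two parts — the equality $\sat(I^{\{k\}}) = \max\{\sat(\qq_i^k) \colon i = 1, \ldots, r\}$ and the inequality $\max\{\sat(\qq_i^k)\colon i\} \le \sat(I^k)$ — and to dispatch each using material already in place. For the equality I would do nothing new: since $I$ is $\mm$-primary, Proposition~\ref{symb}(c) gives $\sat(I^{\{k\}}) = \max\{\sat(\qq_i^k) \colon i = 1, \ldots, r\}$ word for word. Hence the entire content of the proposition is the inequality, and I would devote the proof to it.

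For the inequality, the key reduction is to recognize that both $I^k$ and $I^{\{k\}}$ are $\mm$-primary and then to exploit the clean description of the saturation number in that case. First I would record the chain $I^k \subseteq I^{\{k\}} \subseteq I$, where the left inclusion is Proposition~\ref{symb}(a) and the right one follows at once from $\qq_i^k \subseteq \qq_i$ for each $i$. Taking radicals across this chain and using $\sqrt{I^k} = \sqrt{I} = \mm$ (because $I$ is $\mm$-primary) squeezes $\sqrt{I^{\{k\}}}$ between $\mm$ and $\mm$, so $\sqrt{I^{\{k\}}} = \mm$ and $I^{\{k\}}$ is $\mm$-primary as well.

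Then I would invoke the characterization of the saturation number recorded in the Remark following Corollary~\ref{satreg}: for an $\mm$-primary monomial ideal $J$ one has $J^{\sat} = S$ and $\sat(J) = \min\{n \colon \mm^n \subseteq J\}$. Applying this to $J = I^k$ and to $J = I^{\{k\}}$, and observing that $I^k \subseteq I^{\{k\}}$ forces every $n$ with $\mm^n \subseteq I^k$ to satisfy $\mm^n \subseteq I^{\{k\}}$, I obtain the monotonicity $\min\{n \colon \mm^n \subseteq I^{\{k\}}\} \le \min\{n \colon \mm^n \subseteq I^k\}$, that is, $\sat(I^{\{k\}}) \le \sat(I^k)$. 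Chaining this with the equality from Proposition~\ref{symb}(c) yields the displayed formula. The only genuine subtlety — the ``obstacle'', such as it is — lies in confirming that $I^{\{k\}}$ is $\mm$-primary so that the min-power description legitimately applies; once that is settled, the monotonicity of $\min\{n \colon \mm^n \subseteq \cdot\}$ under inclusion makes the inequality immediate, with no delicate estimate required.
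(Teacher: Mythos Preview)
Your proof is correct, and the route you take for the inequality is genuinely different from the paper's. You both obtain the equality by quoting Proposition~\ref{symb}(c), but for $\sat(I^{\{k\}})\le\sat(I^k)$ the paper argues by contradiction through the structure of irreducible decompositions: it takes an irredundant irreducible decomposition of $I^k$, uses Theorem~\ref{main}(a) to get $I^k\colon\mm^{t'_k-1}=\mm$ with $t'_k=\sat(I^k)$, pushes this into $I^{\{k\}}\colon\mm^{t'_k-1}$ via $I^k\subseteq I^{\{k\}}$, and then invokes the proof of Theorem~\ref{main}(b) to derive a contradiction if $t'_k<t_k$. Your argument bypasses irreducible components of $I^k$ entirely: once you check that $I^{\{k\}}$ is $\mm$-primary, the formula $\sat(J)=\min\{n:\mm^n\subseteq J\}$ from Remark~(ii) reduces the inequality to the trivial monotonicity of this minimum under the inclusion $I^k\subseteq I^{\{k\}}$. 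Your approach is shorter and more transparent, and in fact shows the slightly more general fact that $\sat$ is antitone on $\mm$-primary ideals; the paper's argument, on the other hand, stays closer to the explicit machinery of Theorem~\ref{main} and thus ties the result more visibly to the irreducible decomposition theme of the section.
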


\begin{proof}
Assume that $t_k= \sat(I^{\{k\}})=\max\{\sat(\qq_i^k) \colon\; i=1, \ldots , r\}$ and $t'_k= \sat(I^k)$. 
Let $I^k=\cap _{i=1}^{r}\qq'_i$ be an irredundant irreducible decomposition of $I^k$. It follows from Theorem~\ref{main}(a) that $I^k\colon\mm^{t'_k-1}=\cap _{i=1}^{r}(\qq'_i \colon\mm^{t'_k-1})=\mm$. So Proposition~\ref{symb}(a) implies that $\mm \subseteq I^k\colon\mm^{t'_k-1}  \subseteq I^{\{k\}}\colon\mm^{t'_k-1}$.  If $t'_k<t_k$, we get $I^{\{k\}}\colon\mm^{t'_k-1} \subsetneq \mm$ by the proof of Theorem~\ref{main}(b),  a contradiction. Thus, $t'_k\geq t_k$. This completes the proof.
\end{proof}

There are $\mm$-primary monomial ideals with $\sat(I^{\{k\}}) < \sat(I^k)$ (see Remark~\ref{useful examples}(b)). On the other hand, if $\qq$ is an $\mm$-primary irreducible monomial ideal, then $\qq^{(k)}=\qq^k= \qq^{\{k\}}$ and hence $\sat(\qq^{(k)})= \sat(\qq^k)=\sat(\qq^{\{k\}})$. This shows that $\sat(I^k)$ is the best possible bound for $\sat(I^{\{k\}})$. 

In view of Proposition~\ref{satsat}, it is natural to pose for which classes of $\mm$-primary monomial ideals the equality $\sat(I^{\{k\}})= \sat(I^k)$ holds (see Problem~\ref{3333}). In the following we show that the equality holds for $\mm$-primary stable monomial ideals. For a monomial $u \in S$ we denote by $m(u)$ the maximal number $j$ such that $x_j |u$.
\begin{Definition}
Let $I\subset S$ be a  monomial ideal.
\begin{enumerate}
\item[(a)] $I$ is called \textit{stable} if for all $u\in \G(I)$ and all $i<m(u)$, one has  $x_iu/x_{m(u)}\in I$.
\item[(b)]  $I$ is called \textit{strongly stable} if for all  $u\in \G(I)$ and all $i<j$  with $x_j|u$, it follows that $x_iu/x_{j}\in I$.
\end{enumerate}
\end{Definition}

It is clear that a strongly stable monomial ideal is stable and the product of two (strongly) stable monomial ideals is (strongly) stable. For a strongly stable ideal $I$, it is proved in \cite[Theorem 1.2]{AHZ} that
\begin{equation} \label{sat of strongly stable ideal}
\sat(I)=\max\{\ell \colon \; x_n^{\ell}|u\; \text{for some }u\in \G(I)\}. 
\end{equation}
In particular, $\sat(I^k)=k\cdot \sat(I)$, if $I$ is an $\mm$-primary strongly stable ideal. Proposition~\ref{stable} shows that \eqref{sat of strongly stable ideal} holds for stable ideals as well.

\begin{Proposition}[{Compare \cite[Corollary 2.4]{ib-ptg}}] \label{stable}
Let $I \subset S$ be a stable monomial ideal. Then
\[ \sat(I)=\max\{\ell \colon \; x_n^{\ell}|u\,\text{ for some }u\in \G(I)\}. \]
\end{Proposition}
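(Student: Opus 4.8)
The plan is to reduce the $\mm$-saturation of $I$ to a saturation with respect to the single variable $x_n$, where the answer is transparent. Set $s=\max\{\ell \colon x_n^\ell \mid u \text{ for some } u\in\G(I)\}$. I will first record an exchange property for stable ideals that extends the defining condition from minimal generators to arbitrary monomials in $I$: if $v\in I$ is a monomial and $i<m(v)$, then $x_iv/x_{m(v)}\in I$. To see this, pick $g\in\G(I)$ with $g\mid v$ and write $v=gh$; if $x_{m(v)}\mid h$ one factors $x_iv/x_{m(v)}=g\cdot(x_ih/x_{m(v)})\in I$, and otherwise $x_{m(v)}\nmid h$ forces $m(g)=m(v)$, so stability applied to $g$ gives $x_ig/x_{m(g)}\in I$ and hence $x_iv/x_{m(v)}=(x_ig/x_{m(g)})h\in I$.

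The central step is then the identity $I\colon\mm^k=I\colon x_n^k$ for every $k\ge 1$. The inclusion $\subseteq$ is immediate from $x_n^k\in\mm^k$. For the reverse inclusion I take a monomial $w$ with $wx_n^k\in I$ and an arbitrary monomial $u=x_{j_1}\cdots x_{j_k}$ of degree $k$ with $j_1\le\cdots\le j_k\le n$, and I successively replace the copies of $x_n$ in $wx_n^k$ by $x_{j_k},x_{j_{k-1}},\ldots,x_{j_1}$, applying the exchange property at each stage. The point to monitor is that after $t$ replacements the monomial still contains a factor $x_n$ (its $x_n$-exponent is at least $k-t\ge 1$ for $t\le k-1$), so $x_n$ remains the largest occurring variable and the exchange from $x_n$ to $x_{j_{k-t}}$ is legitimate; after $k$ steps one lands on $wu\in I$. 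As $u$ ranges over all degree-$k$ monomials this yields $w\mm^k\subseteq I$, i.e. $w\in I\colon\mm^k$.

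Once this identity is in hand, the problem becomes purely about the chain $I\colon x_n^k$, and I would show that $\sat(I)=\min\{k\colon I\colon x_n^k=I\colon x_n^{k+1}\}$ equals $s$ by the two usual inequalities. For the upper bound, if $wx_n^{s+1}\in I$ then a generator $g=x_n^a\tilde g$ (with $a\le s$ and $x_n\nmid\tilde g$) divides $wx_n^{s+1}$; since $a\le s$ the same $g$ already divides $wx_n^{s}$, so $I\colon x_n^{s}=I\colon x_n^{s+1}$, and the standard stabilization of colon chains propagates this for all larger exponents. For the lower bound, choose $u\in\G(I)$ with $x_n$-exponent exactly $s$, write $u=x_n^s\tilde u$, and note that $\tilde u\in (I\colon x_n^{s})\setminus(I\colon x_n^{s-1})$ because $\tilde u x_n^{s}=u\in I$ while $\tilde u x_n^{s-1}=u/x_n\notin I$ (a proper divisor of the minimal generator $u$). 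Hence the chain $I\colon x_n^k$ strictly grows up to step $s$ and is constant afterwards, giving $\sat(I)=s$; the degenerate case $s=0$ (no generator divisible by $x_n$) gives $I\colon\mm=I\colon x_n=I$ and $\sat(I)=0$ directly.

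The main obstacle is the identity $I\colon\mm^k=I\colon x_n^k$, and within it the bookkeeping of the iterated exchange: one must verify that $x_n$ stays the maximal variable throughout the $k$ substitutions so that the exchange property keeps applying, and that this remains valid even when several of the indices $j_s$ equal $n$. Notably, this route avoids having to prove that the successive colon ideals $I\colon x_n^k$ are again stable (which would appear to require strong stability rather than mere stability), so the exchange lemma for the single fixed stable ideal $I$ is all that is needed.
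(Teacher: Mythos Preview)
Your proof is correct and follows essentially the same approach as the paper: both establish the key identity $I\colon\mm^k=I\colon x_n^k$ via iterated exchange in the stable ideal, then read off the saturation number from the chain of colons by $x_n$. The only minor difference is in the endgame: the paper invokes the explicit formula $I\colon x_n^k=(u_i/\gcd(u_i,x_n^k)\,:\,u_i\in\G(I))$ from \cite{EH} together with a depth argument, whereas you verify the stabilization at step $s$ and the strict jump at step $s-1$ directly from minimality of generators, which makes your argument self-contained.
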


\begin{proof}
First, we show that $I\colon\mm^t=I\colon x_n^t$ for all $t\geq 1$. It is clear that $I\colon\mm^t \subseteq I\colon x_n^t$ for all $t\geq 1$. For the reverse inclusion, let $ux_n^t \in I$ for some monomial $u \in S$, and $v=x_1^{a_1}\cdots x_n^{a_n}$ be an arbitrary monomial in $\G(\mm^t)$ where $\sum_{i=1}^{n}a_i=t$.  Since $ux_n^t \in I$ and $I$ is stable, it follows $ux_1^{a_1}\cdots x_n^{a_n}=uv \in I$. So $I\colon x_n^t \subseteq I\colon\mm^t$ and hence, $I\colon\mm^t=I\colon x_n^t$. Now, let $\G(I)=\{ u_1, \ldots , u_m \}$ be the set of minimal monomial generators of $I$. It follows from \cite[Proposition~1.14]{EH} that
\begin{equation} \label{stablestable}
I\colon\mm^k=I\colon x_n^k=(u_i/\gcd(u_i, x_n^k)  \colon\; i=1, \ldots , m),
\end{equation}
for all $k \geq 1$. Let $s=\max\{\ell \colon \; x_n^{\ell}|u\  \text{ for some }u\in \G(I)\}$. Then  $x_n$ does not divide $w$ for all $w \in \G(I\colon\mm^k)$ and all $k\geq s$. Therefore, $I\colon\mm^k$ is a saturated ideal,  since $\depth(S/(I\colon\mm^k)) >0$. Thus, $\sat(I) \leq s$. On the other hand, it follows from \eqref{stablestable} that $x_n | u$ for some $u \in \G(I\colon\mm^{s-1})$. Therefore, $I\colon\mm^{s-1} \neq I\colon\mm^{s}$ and hence, $\sat (I)=s$.
\end{proof}

Let $B(u_1, \ldots,u_n)$ denote the smallest stable ideal containing monomials $u_1, \ldots, u_m$, and $\mathcal{B}(u_1, \ldots,u_n)$ be the smallest strongly  stable ideal containing $u_1, \ldots, u_m$.

\begin{Proposition}\label{ggg}
Let $I \subset S$ be an $\mm$-primary stable monomial ideal such that $x_n^{d} \in \G(I)$ for some positive integer $d$, and  $I=\cap _{i=1}^{r}\qq_i$ be an irredundant irreducible decomposition of  $I$. Then for all $k\geq 1$,
\begin{enumerate}
\item[(a)] $I^k\colon\mm^{kd-1}=\mm$,
\item[(b)] $\sat(I^k)=k\cdot \sat (I)=kd$,  
\item[(c)] $\sat (I^k)=\max\{\sat(\qq_i^k) \colon\; i=1, \ldots , r\}=\sat(I^{\{k\}}).$
\end{enumerate}
\end{Proposition}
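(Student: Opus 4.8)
The plan is to establish (a) directly, deduce (b) as an immediate consequence, and then obtain (c) by combining Proposition~\ref{satsat} with a component-wise estimate from Theorem~\ref{main}. Two preliminary facts drive everything. First, since $x_n^d\in\G(I)$, no minimal generator other than $x_n^d$ can be divisible by $x_n^d$, so $\max\{\ell\colon x_n^\ell\mid u,\ u\in\G(I)\}=d$, and Proposition~\ref{stable} gives $\sat(I)=d$. Second, $x_n^{d-1}\notin I$, so $d$ is the least exponent $e$ with $x_n^e\in I$; consequently the least exponent with $x_n^e\in I^k$ is exactly $kd$, because a pure power of $x_n$ lying in $I^k$ factors into $k$ pure powers of $x_n$ each belonging to $I$, hence each of exponent at least $d$. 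In particular $x_n^{kd-1}\notin I^k$.

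For (a) I would first note that $I^k$ is stable (a product of stable ideals is stable), so the reduction in the proof of Proposition~\ref{stable} gives $I^k\colon\mm^{kd-1}=I^k\colon x_n^{kd-1}$. Since $x_n^{kd-1}\notin I^k$ we have $1\notin I^k\colon\mm^{kd-1}$, so the colon ideal lies in $\mm$. For the reverse inclusion I would show $x_j x_n^{kd-1}\in I^k$ for every $j$: applying stability to $u=x_n^d$, for which $m(u)=n$, gives $x_j x_n^{d-1}\in I$ for each $j<n$, whence $x_j x_n^{kd-1}=(x_n^d)^{k-1}\cdot(x_j x_n^{d-1})\in I^{k-1}\cdot I=I^k$, while the case $j=n$ is just $x_n^{kd}\in I^k$. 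Thus $\mm\subseteq I^k\colon\mm^{kd-1}$, and (a) follows.

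Part (b) is then immediate: from (a) we have $I^k\colon\mm^{kd-1}=\mm\subsetneq S$, whereas $I^k\colon\mm^{kd}=(I^k\colon\mm^{kd-1})\colon\mm=\mm\colon\mm=S$, so the ascending chain stabilizes precisely at step $kd$ and $\sat(I^k)=kd=k\cdot\sat(I)$. For (c), Proposition~\ref{satsat} already yields $\sat(I^{\{k\}})=\max_i\sat(\qq_i^k)\leq\sat(I^k)=kd$, so only $\max_i\sat(\qq_i^k)\geq kd$ remains. Here one observes that each $\qq_i$ is $\mm$-primary (the intersection of their radicals is $\mm$, forcing every radical to be $\mm$), say $\qq_i=(x_1^{a_{1,i}},\ldots,x_n^{a_{n,i}})$ with all exponents positive; since $x_n^e\in I$ iff $e\geq\max_i a_{n,i}$, we get $\max_i a_{n,i}=d$, so some component $\qq_{i_1}$ has $a_{n,i_1}=d$. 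Writing $M=\max_j a_{j,i_1}\geq d$, Theorem~\ref{main}(b) gives $\sat(\qq_{i_1}^k)=(k-1)M+\sum_j a_{j,i_1}-n+1\geq kM\geq kd$, using $\sum_j a_{j,i_1}\geq M+(n-1)$. This forces equality throughout, completing (c).

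I expect the main difficulty to be bookkeeping rather than conceptual. In (a) the delicate point is that $x_n^{kd-1}\notin I^k$ really requires $x_n^d\in\G(I)$ (so that $d$ is the \emph{minimal} $x_n$-exponent occurring in $I$), not merely $x_n^d\in I$; this is exactly what makes the pure-power factorization argument work. In (c) the subtlety is that the maximal exponent $M$ in the chosen component $\qq_{i_1}$ need not be attained by $x_n$, so one must verify that the Theorem~\ref{main} estimate still produces a value at least $kd$ in that case, which is where the bound $\sum_j a_{j,i_1}\geq M+(n-1)$ is used.
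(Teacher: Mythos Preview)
Your proof is correct and follows the same overall strategy as the paper: establish (a) via stability, read off (b), and get (c) by combining Proposition~\ref{satsat} with Theorem~\ref{main} applied to a well-chosen irreducible component. The differences are only in the details. For (a), the paper argues more globally: since $I$ is stable and contains $x_n^d$, it contains the smallest stable ideal $B(x_n^d)=\mm^d$, so $\mm\cdot\mm^{d-1}\subseteq I$ gives $I\colon\mm^{d-1}=\mm$ in one stroke (and likewise for $I^k$), whereas you reduce to the $x_n$-colon and check $x_jx_n^{kd-1}\in I^k$ generator by generator. For (b), the paper invokes Proposition~\ref{stable} directly on $I^k$ (using $x_n^{kd}\in\G(I^k)$), while you deduce it from (a); both are fine. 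For (c), the paper exploits stability once more to conclude that in the chosen component the largest exponent is the one on $x_n$, so Theorem~\ref{main} gives exactly $kd+\sum b_i-(n-1)\geq kd$; your argument is slightly more robust in that you allow the maximal exponent $M$ to sit anywhere and still squeeze out $\sat(\qq_{i_1}^k)\geq kM\geq kd$, so that particular step of yours does not actually need $I$ to be stable.
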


\begin{proof}
(a) First we show that $I\colon\mm^{d-1}=\mm$. It is clear that  $I\colon\mm^{d-1} \subseteq \mm$.  For the reverse inclusion we must show that $\mm \cdot \mm^{d-1} \subseteq I$. Since $I$ is stable and $x_n^{d} \in \G(I)$, we get  $B(x_n^{d}) \subseteq I$. Therefore,
\[ \mm \cdot \mm^{d-1}=\mm^{d}=B(x_n^{d}) \subseteq I.  \]
Thus, $I\colon\mm^{d-1}=\mm$. 
Since $I^k$ is stable and $x_n^{kd} \in \G(I^k)$ the above discussion shows that $I^k\colon\mm^{kd-1}=\mm$.

(b)  Since $x_n^{kd} \in \G(I^k)$ for all $k\geq 1$, the assertion follows from Proposition~\ref{stable}.

(c)  Let $t_k=\max\{\sat(\qq_i^k) \colon\; i=1, \ldots , r\}$ for $ k\geq 1$. Since $I$ is $\mm$-primary we have $x_1^{a_1}, \ldots, x_n^{a_n} \in \G(I)$ for some positive integers $a_1, \ldots, a_n$, and since $I$ is stable, we get $a_n \geq a_{n-1} \geq \cdots \geq a_1$. It is obvious that  the irreducible ideal $(x_1^{b_1},x_2^{b_2}, \ldots,  x_{n-1}^{b_{n-1}},x_n^{a_n})$ appears in the irredundant irreducible decomposition of $I$ for some positive integers  $b_1, b_2, \ldots, b_{n-1} \leq a_n$. We denote this component by $\qq_\textbf{b}$. By Theorem~\ref{main}(b), $\sat(\qq_\textbf{b}^k)=ka_n+\sum_{i=1}^{n-1}b_i-(n-1)$ for $k\geq 1$. The ideal $\qq_\textbf{b}$  is an $\mm$-primary ideal, since $I$ is $\mm$-primary. Thus $b_j \geq 1$ for $j=1, \ldots , n-1$ and hence, $\sum_{i=1}^{n-1}b_i\geq n-1$. Therefore, $\sat(\qq_\textbf{b}^k) \geq ka_n$. Since $t_k \leq \sat (I^k)$ by Proposition~\ref{satsat}, we get $\sat(\qq_\textbf{b}^k) \leq \sat (I^k)=ka_n$ and so $\sat(\qq_\textbf{b}^k) = ka_n$.  Hence, $\sat (I^k)=t_k $ for all $k\geq 1$.
\end{proof}

\begin{Remark} \mbox{} \label{useful examples}
\begin{enumerate}
\item[(a)] The statement of Proposition~\ref{ggg}(b) may fail if $I$ is not $\mm$-primary, even when $I$ is strongly stable monomial ideal. For instance, let $I$ be the strongly stable ideal
\[ \mathcal{B}(x_1^2, x_2^2x_3^2, x_1x_2x_3x_4) \subset \KK[x_1,x_2,x_3,x_4].\]
Then,
\begin{align*}
I=(x_1^2, x_1x_2x_3^2, x_1x_2^2x_3, x_1x_2^3, x_2^3x_3, x_2^4, x_2^2x_3^2,  x_1x_2^2x_4,x_1x_2x_3x_4),
\end{align*}
and so
\begin{align*}
I^2=(& x_1^4, x_1^3x_2^2x_4, x_1^3x_2x_3x_4,  x_1^3x_2x_3^2, x_1^3x_2^2x_3, x_1^3x_2^3, x_1^2x_2^4, 
x_1^2x_2^3x_3, \\ & x_1^2x_2^2x_3^2, x_1x_2^6x_4,x_1x_2^7, x_2^8, x_1x_2^5x_3x_4, x_1x_2^6x_3, x_2^7x_3,
x_1x_2^4x_3^2x_4, \\ & x_1x_2^3x_3^3x_4, x_1x_2^3x_3^4, x_1x_2^4x_3^3, x_1x_2^5x_3^2, x_2^6x_3^2, x_2^5x_3^3, x_2^4x_3^4). 
\end{align*}
One may check that $\sat(I^2)=1=\sat(I)\neq 2 \cdot\sat(I)$. Note that, in this example $I$ is not equigenerated. If  $I$ is an equigenerated strongly stable monomial ideal, then the equality $\sat(I^k)=k\cdot \sat(I)$ follows from \cite[Corollary~1.3]{AHZ}.
 
\item[(b)] The equality of  Proposition~\ref{ggg}(c) may not hold if $I$ is not stable. In other words, the inequality $\max\{\sat(\qq_i^k) \colon\; i=1, \ldots , r\}   \leq \sat(I^k)$ in Proposition~\ref{satsat} may be strict. For example, let
\begin{equation*}
I=(x_1^{50}, x_1^{40}x_2^{10}, x_1^{39}x_2^{34}, x_1^{38}x_2^{35}, x_1^{37}x_2^{36}, x_1^{36}x_2^{37}, x_1^{35}x_2^{38},x_1^{34}x_2^{39}, x_1^{10}x_2^{40}, x_2^{50}).
\end{equation*}
Then,
\begin{equation*}
I^2=(x_1^{100}, x_1^{90}x_2^{10}, x_1^{80}x_2^{20}, x_1^{60}x_2^{40}, x_1^{50}x_2^{50}, x_1^{40}x_2^{60},x_1^{20}x_2^{80}, x_1^{10}x_2^{90}, x_2^{100}).
\end{equation*}
Using equation \eqref{pri} and Theorem~\ref{main} we get $\max\{\sat(\qq_i^2) \colon\; i=1, \ldots , r\}=113$, while using Theorem~\ref{primsat} we get $\sat(I^2)=119$.
\end{enumerate}
\end{Remark}

Our last result is devoted to finding the saturation number of monomial ideals in two variables. Let $I \subset S=\KK[x_1,x_2]$ be a monomial ideal and $\G(I)=\{x_1^{a_1}x_2^{b_1}, \ldots , x_1^{a_m}x_2^{b_m} \}$ be  the minimal set of generators of $I$. We may assume that $a_1> a_2  > \cdots > a_m \geq 0$. Then $0 \leq b_1 < b_2  < \cdots < b_m$. The following theorem gives the saturation number of $I$ in terms of the $a_i$'s and $b_i$'s.

\begin{Theorem} \label{primsat}
Let $I \subset S=\KK[x_1,x_2]$ be a monomial ideal with the minimal set of generators $\G(I)=\{x_1^{a_i}x_2^{b_i}  \}_{i= 1, \ldots , m}$. Assume that $a_1> a_2  > \cdots > a_m \geq 0$ and $0 \leq b_1 < b_2  < \cdots < b_m$.  Then 
\[ \sat(I)=s-a_m-b_1-1, \]
where $s=\max \{a_i+b_{i+1}\colon\; i=1, \ldots , m-1\}$.
\end{Theorem}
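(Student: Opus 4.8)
The plan is to reduce the statement to the case where $I$ is $\mm$-primary and, there, to read off the irreducible decomposition and apply Theorem~\ref{main} and Corollary~\ref{satreg}. The reduction rests on an observation I would isolate as a lemma: for any monomial $w$ and any monomial ideal $J\subset S$ one has $\sat(wJ)=\sat(J)$. To prove it I would verify the identity $(wJ)\colon\mm^k=w\,(J\colon\mm^k)$ for every $k\ge 0$. The inclusion $\supseteq$ is immediate, and for $\subseteq$ I would take a monomial $g$ with $g\,\mm^k\subseteq wJ$, observe that $gx_i^k\in wJ$ forces $w\mid gx_i^k$ for each $i$, and conclude by comparing exponents in the two different variables that $w\mid g$; writing $g=wg'$ then gives $g'\in J\colon\mm^k$. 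Since $wA=wB$ forces $A=B$ for monomial ideals (divide by $w$), the chains $\{(wJ)\colon\mm^k\}_k$ and $\{J\colon\mm^k\}_k$ stabilize at the same step, whence $\sat(wJ)=\sat(J)$.

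Next I would settle the case $a_m=b_1=0$, so that $I$ is $\mm$-primary (now $x_1^{a_1}$ and $x_2^{b_m}$ are among the generators). The key step is to identify the irredundant irreducible decomposition as
\[ I=\bigcap_{i=1}^{m-1}\bigl(x_1^{a_i},\,x_2^{b_{i+1}}\bigr). \]
To prove this I would compare membership of an arbitrary monomial $x_1^cx_2^d$ on both sides: it lies in $I$ iff $a_i\le c$ and $b_i\le d$ for some $i$, while it lies in the right-hand side iff for every $i\le m-1$ one has $c\ge a_i$ or $d\ge b_{i+1}$. The forward implication uses the monotonicity of the $a_i$ and $b_i$; for the reverse I would set $j=\min\{i\colon a_i\le c\}$ (which exists because $a_m=0$) and use the defining condition at the index $j-1$ (when $j\ge 2$; the case $j=1$ is immediate from $b_1=0$) to force $d\ge b_j$, so that the generator indexed by $j$ divides $x_1^cx_2^d$. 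Irredundancy follows since the corner monomial $x_1^{a_i-1}x_2^{b_{i+1}-1}$ lies in every component except the $i$-th. As $a_m=b_1=0$, each component $(x_1^{a_i},x_2^{b_{i+1}})$ has both exponents positive, hence is $\mm$-primary, and Theorem~\ref{main}(b) gives $\sat\bigl((x_1^{a_i},x_2^{b_{i+1}})\bigr)=a_i+b_{i+1}-1$. Corollary~\ref{satreg}(b) then yields $\sat(I)=\max_i(a_i+b_{i+1})-1=s-1$, which is exactly the claimed formula since $a_m=b_1=0$.

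Finally I would treat the general case. Put $w=x_1^{a_m}x_2^{b_1}$, the greatest common divisor of the generators, and write $I=wJ$ with $J=(x_1^{a_i-a_m}x_2^{b_i-b_1}\colon i=1,\ldots,m)$. Dividing by $w$ preserves the strict monotonicity of the exponents and makes the minimal $x_1$- and $x_2$-exponents of $J$ equal to $0$, so $J$ is $\mm$-primary and the previous step applies, giving $\sat(J)=\hat s-1$ with $\hat s=\max_i\bigl((a_i-a_m)+(b_{i+1}-b_1)\bigr)=s-a_m-b_1$. By the lemma, $\sat(I)=\sat(wJ)=\sat(J)=s-a_m-b_1-1$, as required (the degenerate case $m=1$, where $I$ is principal and hence saturated, is handled separately). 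I expect the main obstacle to be the second step, namely proving the irreducible decomposition correctly — especially the reverse membership and the irredundancy — since the first and third steps are then short formal consequences. It is worth stressing that one cannot bypass the reduction and apply Corollary~\ref{satreg}(b) directly to $I$: when $I$ is not $\mm$-primary its decomposition also contains the saturated components $(x_1)$ or $(x_2)$, and Corollary~\ref{satreg}(a) only furnishes an upper bound, which is typically strict.
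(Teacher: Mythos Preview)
Your argument is correct and takes a genuinely different route from the paper. The paper keeps the full irreducible decomposition
\[
I=(x_2^{b_1})\cap(x_1^{a_1},x_2^{b_2})\cap\cdots\cap(x_1^{a_{m-1}},x_2^{b_m})\cap(x_1^{a_m})
\]
(citing \cite{MS}), distributes $-\colon\mm^k$ across the intersection, and computes each two-generated colon $(x_1^{a_i},x_2^{b_{i+1}})\colon\mm^k$ explicitly via Verma's formula \cite{V} together with Theorem~\ref{main}; it then checks directly that $I\colon\mm^k=(x_1^{a_m}x_2^{b_1})$ for $k\ge s-a_m-b_1-1$ and exhibits $x_1^{a_m}x_2^{b_1}\notin I\colon\mm^{s'-1}$ to show the bound is sharp. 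Your approach instead isolates the reusable lemma $\sat(wJ)=\sat(J)$, factors out the gcd to reduce to the $\mm$-primary case, verifies the same decomposition by an elementary membership argument, and then invokes Corollary~\ref{satreg}(b) once. What you gain is self-containment (no appeal to \cite{V} or \cite{MS}) and a conceptual explanation of why the correction term $a_m+b_1$ appears: it is precisely the degree of the common factor that the saturation number ignores. What the paper's approach gains is an explicit description of all the colon ideals $I\colon\mm^k$ along the way, not just the index at which the chain stabilizes. Your closing remark is apt: the reduction is essential, since in the non-$\mm$-primary case Corollary~\ref{satreg}(a) gives only the upper bound $s-1$, which overshoots by exactly $a_m+b_1$.
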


\begin{proof}
By \cite[Proposition 3.2]{MS}  $I$ has an irredundant irreducible decomposition
\begin{equation} \label{pri}
I=(x_2^{b_1}) \cap (x_1^{a_1}, x_2^{b_2}) \cap  \cdots \cap (x_1^{a_{m-1}}, x_2^{b_m}) \cap (x_1^{a_m}),
\end{equation}
where the first or last components are to be deleted if $b_1 = 0$ or $a_m = 0$. So, for all $k \geq 1$ we get
\begin{align*}
I\colon\mm^k&=\left( (x_2^{b_1})\colon\mm^k  \right) \cap \left( (x_1^{a_1}, x_2^{b_2})\colon\mm^k \right) \cap  \cdots \cap \left( (x_1^{a_{m-1}}, x_2^{b_m})\colon\mm^k \right) \cap \left( (x_1^{a_m})\colon\mm^k\right) 
\\&=(x_1^{a_m}x_2^{b_1})\cap \left(\bigcap_{i=1}^{m-1}\left((x_1^{a_i},x_2^{b_{i+1}})\colon\mm^k\right) \right).
\end{align*}

Let $s'=s-a_m-b_1-1$. Then \cite[Theorem 1.4]{V} and Theorem~\ref{main} imply that:
\begin{equation*}
(x_1^{a_i}, x_2^{b_{i+1}})\colon\mm^k = \begin{cases}
(x_1^{a_i}, x_2^{b_{i+1}})+\mm^{a_i+b_{i+1}-k-1}, & \text{if } k< a_i+b_{i+1}-1\\
S, & \text{O.W.}
\end{cases}
\end{equation*}
for all $i=1, \ldots, m-1$. Notice that $a_i+b_{i+1}-k-1 \leq a_m+b_1$ if $k< a_i+b_{i+1}-1$. So in this case  $(x_1^{a_m}x_2^{b_1}) \subset \mm^{a_i+b_{i+1}-k-1}$. Therefore, $I\colon\mm^k=(x_1^{a_m}x_2^{b_1})$ for all $k \geq s'$.

Now, it is enough to show that $I\colon\mm^{s'-1}\neq (x_1^{a_m}x_2^{b_1})$. To this end, we observe that 
\[ I\colon\mm^{s'-1}=(x_1^{a_m}x_2^{b_1})\cap \left( \bigcap _{i=1}^{m-1}\left( (x_1^{a_i},x_2^{b_{i+1}})\colon\mm^{s'-1} \right)\right). \]
Let $i_0 \in \{1, \ldots, m-1\}$ be such that
\[ \max \{a_i+b_{i+1}\colon\; i=1, \ldots , m-1\}=a_{i_{0}}+b_{i_{0}+1}. \]
Then $(x_1^{a_{i_{0}}}, x_2^{b_{i_{0}+1}})\colon\mm^{s'-1}=(x_1^{a_{i_{0}}}, x_2^{b_{i_{0}+1}})+\mm^{a_m+b_1+1}$ by \cite[Theorem 1.4]{V}. Thus,  $x_1^{a_m}x_2^{b_1}\notin (x_1^{a_{i_{0}}}, x_2^{b_{i_{0}+1}})\colon\mm^{s'-1}$. The proof is complete.
\end{proof}

\section{Problems}
In this section we propound some problems dealing with the equality $\sat(I^k)=\sat(I^{\{k\}})$ where $I$ is an $\mm$-primary monomial ideal. We observed in Proposition~\ref{satsat} that $\sat(I^{\{k\}}) \leq \sat(I^k)$, if $I$ is an $\mm$-primary monomial ideal. Meanwhile we have seen in Remark~\ref{useful examples}(b) that this inequality may be strict. Note that the ideal in Remark~\ref{useful examples}(b) is not equigenerated. Our running examples in $\KK[x,y]$ show that the equality holds in the case of $\mm$-primary equigenerated monomial ideals. Therefore, our first question is stated as follows:

\begin{Problem}\label{333}
Let $I$ be an $\mm$-primary equigenerated monomial ideal. Does the following equality hold?
\[ \sat(I^k)=\sat(I^{\{k\}}). \]
\end{Problem}

Any positive answer to one of the following statements, leads to an affirmative response to Problem~\ref{333}.
\begin{Problem}\label{111}
Let $I$ be an $\mm$-primary equigenerated monomial ideal and $I=\cap _{i=1}^{r}\qq_i$ be an irredundant irreducible decomposition of $I$. 
\begin{itemize}
\item[(a)] Let $t_k=\max\{\sat(\qq_i^k) \colon\; i=1, \ldots, r\}$ for all $k=1, \ldots, r$. Is it true that $\mm \subseteq I^k\colon\mm^{t_k}$?
\item[(b)] Let $k$ be a positive integer. Does there exist a positive integer $s_k$ such that  $s_k <\max\{\sat(\qq_i^k) \colon\; i=1, \ldots , r\}$ and $I^k\colon\mm^{l} =I^{\{k\}}\colon\mm^{l}$ for all $l \geq s_k$.
\end{itemize}
\end{Problem}

Proposition~\ref{ggg} provides a sufficient condition for an $\mm$-primary monomial ideal  satisfying the equality $\sat(I^k)= \sat(I^{\{k\}})$, namely being stable. Accordingly, we pose the following more general problem:
\begin{Problem}\label{3333}
Let $I$ be an $\mm$-primary monomial ideal. Under which conditions on $I$ does the equality $\sat(I^k)=\sat(I^{\{k\}})$ hold for all $k$?
\end{Problem}


\end{document}